\newtheorem{lemma}{Lemma}
\newtheorem{theorem}{Theorem}
\title[Limits of products of positive elements]{C*-algebras where every element is a limit of products of positive elements}
\author{Leonel Robert}
\address{Department of Mathematics, University of Louisiana at Lafayette, Lafayette, 70506, USA}
\email{lrobert@louisiana.edu}
\begin{document}
\begin{abstract}
We obtain a characterization of the unital C*-algebras with the property that every element is a limit of products of positive elements, thereby answering a question of Murphy and Phillips.	
\end{abstract}	

\maketitle	
	
Let $A$ be a unital C*-algebra. Let $A_+$ denote the set of positive elements of $A$. Let $P(A)$ denote  the set of finite products of elements in $A_+$.  Quinn showed in \cite{quinn} that if $A$ is an AF C*-algebra and has no finite dimensional representations, then $P(A)$ is  dense in $A$. In \cite{mur-phil}, Murphy and Phillips took up the systematic investigation of the C*-algebras with the property that $P(A)$ is dense in $A$, which they termed the  approximate positive factorization property (APFP). They obtained necessary conditions for the APFP to hold, confirmed the APFP for various classes of C*-algebras, and raised the following question (\cite[Question 4.12]{mur-phil}): If $A$ is an infinite dimensional simple unital C*-algebra of real rank zero, stable rank one, and trivial $K_1$-group, does $A$ have the  APFP? In the theorem below 
we answer this question affirmatively by establishing necessary and sufficient conditions for the APFP to hold.

\begin{theorem}\label{main}
Let $A$ be a unital C*-algebra. The following are equivalent:
\begin{enumerate}[(i)]
\item
$P(A)$ is dense in $A$, i.e., $A$ has the approximate positive factorization property.
\item
$A$ has no nonzero finite dimensional representations, the stable rank of $A$ is one, the $K_1$-group is trivial, and the map  $\rho_A\colon K_0(A)\to \mathrm{Aff}(T(A))$, from the $K_0$-group of $A$ to the continuous affine functions on the trace space of $A$, has dense range.
\end{enumerate}
\end{theorem}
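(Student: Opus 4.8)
The plan is to prove the two implications separately, using the de la Harpe--Skandalis determinant as the organizing tool.

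For (i) $\Rightarrow$ (ii), each of the four conditions is forced by a determinant-type obstruction. If $\pi\colon A\to M_n$ were a nonzero representation, then $\det\circ\pi$ would be continuous and nonnegative-real on $P(A)$ (a product of positive matrices has determinant in $[0,\infty)$), hence nonnegative-real on all of $\overline{P(A)}=A$; but $\det\circ\pi$ attains non-real values on $A$, so $A$ has no finite-dimensional representations. For stable rank one I would note that perturbing each factor $p_i$ to $p_i+\varepsilon$ shows the products of positive \emph{invertible} elements are dense in $A$, so $GL(A)$ is dense. These same products lie in the identity component $GL_0(A)$, so the dense set $P(A)\cap GL(A)$ is contained in the closed set $GL_0(A)$; hence $GL(A)=GL_0(A)$ is connected and $K_1(A)=0$. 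Finally, for a tracial state $\tau$ the real part of the de la Harpe--Skandalis determinant defines a continuous homomorphism $d_\tau\colon GL_0(A)\to\R/\rho_A(K_0(A))$ that vanishes on positive invertibles (whose determinant is purely imaginary) and satisfies $d_\tau(\exp(2\pi i h))=\tau(h)$ for self-adjoint $h$. Since $d_\tau$ vanishes on the dense set $P(A)\cap GL(A)$, the induced map into $\R/\overline{\rho_A(K_0(A))}$ is identically zero, so $\tau(h)\in\overline{\rho_A(K_0(A))}$ for all self-adjoint $h$ and all $\tau$; as $\{\widehat h\}$ is dense in $\mathrm{Aff}(T(A))$, this forces $\rho_A$ to have dense range.

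For (ii) $\Rightarrow$ (i), I would first reduce the problem. Polar decomposition together with stable rank one shows it suffices to prove $U(A)\subseteq\overline{P(A)}$, since $a=u|a|$ with $|a|\in A_+$ and $\overline{P(A)}\cdot A_+\subseteq\overline{P(A)}$. Because $\overline{P(A)}$ is a closed subsemigroup and $K_1(A)=0$ places every unitary in $U_0(A)$, the dense range now enters through the determinant: the de la Harpe--Skandalis determinant takes values in $\mathrm{Aff}(T(A))/\overline{\rho_A(K_0(A))}=0$ and is therefore trivial, so by the determinant theory (de la Harpe--Skandalis, Thomsen) its kernel, the whole of $U_0(A)$, equals the closure of the commutator subgroup. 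Hence every unitary is a limit of products of commutators $uvu^*v^*$ of unitaries, and since $\overline{P(A)}$ is closed under multiplication the entire implication reduces to the core assertion that each such commutator lies in $\overline{P(A)}$.

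The engine for the core assertion is a factorization of rotations as honest products of positive elements. If $e,f$ are equivalent orthogonal projections with partial isometry $w$, then in the resulting $2\times2$ corner an operator rotation through a self-adjoint angle $k$ (equal to $\cos k$ on the diagonal and to $\pm\sin k$ off the diagonal, built from $k,w,w^*$) has corner-determinant one, and is therefore a product of at most five positive elements by Ballantine's matrix factorization theorem, each factor completed by the identity on the complementary projection so as to be positive in $A$. The task is then to realize a given commutator---equivalently the trace-zero self-adjoint exponent it produces, which by the Cuntz--Pedersen description satisfies $\overline{[A,A]}_{\mathrm{sa}}=\{h=h^*:\widehat h=0\}$---as a limit of sums of such off-diagonal corner elements. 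The absence of finite-dimensional representations is exactly what I would use to supply the equivalent orthogonal projections (the ``doubling'' that turns a commutator into an off-diagonal $2\times2$ element).

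The main obstacle I expect is this last construction in the absence of real rank zero. When $A$ has real rank zero one can approximate the trace-zero exponent by finite combinations $\sum_k\theta_k(e_k-f_k)$ of differences of equivalent orthogonal projections, exponentiate factor by factor, and apply the corner factorization directly. Without real rank zero the relevant self-adjoints have continuous spectrum, so one must rotate by operator-valued angles and prove a functional-calculus version of Ballantine's factorization for the operator rotation above, all while keeping every approximation inside products of positive elements and producing the required corners uniformly from the no-finite-dimensional-representation hypothesis. Carrying out this continuous rotation argument, and verifying that the determinant bookkeeping closes using the dense range of $\rho_A$, is the technical heart of the proof.
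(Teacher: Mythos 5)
Your high-level skeleton for (ii)$\Rightarrow$(i) matches the paper's: stable rank one and polar decomposition reduce the problem to unitaries; $K_1(A)=0$ together with Rieffel's $K_1$-bijectivity for stable rank one puts every unitary in $U_0(A)$; and Thomsen's determinant theory (with the identification $\pi_1(U_0(A))\cong K_0(A)$, which again requires stable rank one, via \cite{rieffel}) gives $U_0(A)=\overline{DU_0(A)}$. But the step on which everything then hinges --- that elements of $\overline{DU_0(A)}$, e.g.\ multiplicative commutators of unitaries, lie in $\overline{P(A)}$ --- is precisely what you do not prove. Your Ballantine-type corner-rotation program is a plan, not an argument: you yourself flag the continuous-spectrum case as ``the technical heart,'' and you give no mechanism for producing the required equivalent orthogonal projections, nor a functional-calculus version of Ballantine's factorization, from the stated hypotheses. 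This is where the paper does something genuinely different: Theorem \ref{main2} shows that $\widetilde P(A)=\overline{P(A)}\cap GL(A)$ is a closed \emph{normal} subgroup of $GL_0(A)$ (normality is an easy polar-decomposition check), verifies the ideal condition $\mathrm{Ideal}([A,A])=\mathrm{Ideal}([\widetilde P(A),A])$ by a short quotient argument, and then invokes the normal-subgroup theorem of \cite{normal} to conclude $DGL_0(A)\subseteq\widetilde P(A)$. No corner factorization or real-rank-zero-style approximation is needed; the hard work is outsourced to \cite{normal}, and without that (or a completed substitute for it) your implication (ii)$\Rightarrow$(i) is open at its central point.

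There is also a genuine gap in your (i)$\Rightarrow$(ii), in the dense-range argument (the finite-dimensional-representation and stable-rank-one arguments are fine, and the $K_1$ step is fine once you add that stable rank one gives $GL(A)/GL_0(A)\cong K_1(A)$ by \cite{rieffel}). Running the determinant obstruction one tracial state at a time, the vanishing of $d_\tau$ on the dense set of invertible products of positives yields only that $\tau(h)$ lies in the closure \emph{in $\mathbb{R}$} of $\tau_*(K_0(A))=\{\tau(p)-\tau(q)\}$, for each fixed $\tau$; note also that your codomain $\mathbb{R}/\rho_A(K_0(A))$ should read $\mathbb{R}/\tau_*(K_0(A))$, since the path ambiguity at a fixed trace is exactly $\tau_*(K_0(A))$. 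That conclusion is pointwise density, whereas dense range of $\rho_A$ means density in $\mathrm{Aff}(T(A))$ for the uniform norm, and the implication fails in general: with two extreme traces, a subgroup such as $\{(n+m\alpha,\,n+m\alpha+m):n,m\in\mathbb{Z}\}$ with $\alpha$ irrational has dense image under each coordinate evaluation but is contained in $\{(x,y):y-x\in\mathbb{Z}\}$, hence is far from dense in $\mathbb{R}^2$. To close this gap, run your argument with the universal trace $T\colon A\to A/\overline{[A,A]}$ and use the Cuntz--Pedersen identification of the self-adjoint part of $A/\overline{[A,A]}$ with $\mathrm{Aff}(T(A))$, so that the vanishing happens uniformly --- or do what the paper does: from Theorem \ref{main2} write the approximants as $w_na_n$ with $w_n\in\overline{DU_0(A)}$, compare unitary parts to get $U_0(A)=\overline{DU_0(A)}$, and then apply Thomsen's isomorphism $U_0(A)/\overline{DU_0(A)}\cong\mathrm{Aff}(T(A))/\overline{\Delta_1^0(\pi_1(U_0(A)))}$ as packaged in Lemma \ref{fromthomsen}.
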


Let us recall the meaning of the properties in (ii). 

A unital C*-algebra $A$ is said to have stable rank one if 
its invertible elements form a dense set.  

By trivial $K_1$-group we mean $K_1(A)=0$, where $K_1(A)$ is the abelian group associated to $A$ via K-theory. 

Let $T(A)$ denote the convex set of tracial states on $A$ (endowed with the weak* topology). Let us regard traces extended to $M_\infty(A)$---the infinite matrices on $A$ with finitely many nonzero entries---by setting 
$\tau((a_{ij})_{i,j})=\tau(\sum_{i=1}^\infty a_{ii})$. Given a tracial state $\tau$ and a $K_0$-group element $[p]-[q]\in K_0(A)$, define their pairing as $([p]-[q],\tau)\mapsto \tau(p)-\tau(q)$. Fixing $[p]-[q]$ and varying $\tau$, we get a continuous real-valued affine function on $T(A)$, i.e., an element of $\mathrm{Aff}(T(A))$. Let us denote by   $\rho_A\colon K_0(A)\to \mathrm{Aff}(T(A))$ the mapping induced by this pairing: 
\[
\rho_A([p]-[q])(\tau)=\tau(p)-\tau(q).
\]

It is known that if $A$ is a simple C*-algebra of real rank zero, then the range of $\rho_A$ is dense in $\mathrm{Aff}(T(A))$ (e.g., see \cite[Theorem 4.1]{Dur}). Thus,  Theorem \ref{main} indeed answers Murphy and Phillips' question affirmatively.

In the second theorem of this note (which we prove first), we obtain a description of the invertible elements in $\overline{P(A)}$ which, in a sense,  extends the description as ``elements of positive determinant", valid in the case of  matrices. Let $\widetilde P(A)=\overline{P(A)}\cap GL(A)$, where  $GL(A)$ denotes the invertible elements of $A$. Let $U(A)$ denote the unitary group of $A$, and $U_0(A)$ the connected component of 1 in $U(A)$. Let $DU_0(A)$ denote the commutator subgroup of $U_0(A)$. 

\begin{theorem}\label{main2}
	Let $A$ be a unital C*-algebra. Then $\widetilde P(A)$ is the set of invertible elements whose unitary part in the polar decomposition belongs to $\overline{DU_0(A)}$, i.e.,  the elements of the form $ua$, with $u\in \overline{DU_0(A)}$ and $a\in A_+$ positive and invertible.	
\end{theorem}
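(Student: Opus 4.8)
The plan is to first reduce the statement to a computation inside the unitary group, then split the reduced identity into two inclusions, the second of which carries the real content.

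\textbf{Reduction.} I would first record that the set $G$ of finite products of \emph{invertible} positive elements is a subgroup of $GL(A)$: it is closed under products, and under inverses since $a^{-1}\in A_+$ whenever $a\in A_+\cap GL(A)$. It is invariant under conjugation by unitaries (as $uau^*\in A_+$) and by positive invertibles (as $bab\in A_+$ for $a,b\in A_+$), and it sits inside $GL(A)_0$, the identity component. Approximating a positive factor $p$ by $p+\varepsilon$ gives $P(A)\subseteq\overline G$, so that $\widetilde P(A)=\overline{P(A)}\cap GL(A)=\overline G\cap GL(A)$ is the closure of $G$ in $GL(A)$; in particular it is a closed normal subgroup of $GL(A)_0$. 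For $x\in GL(A)$ with polar decomposition $x=u|x|$ the factor $|x|\in A_+\cap GL(A)\subseteq G$, so $x\in\widetilde P(A)$ iff $u\in\widetilde P(A)$, and $x\in GL(A)_0$ forces $u\in U_0(A)$. Thus the theorem reduces to the single identity $\widetilde P(A)\cap U(A)=\overline{DU_0(A)}$.

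\textbf{The inclusion $\subseteq$.} Here I would bring in the de la Harpe--Skandalis determinant $\Delta\colon GL(A)_0\to \mathrm{Aff}(T(A))_{\C}/\overline{\rho_A(K_0(A))}$, a continuous homomorphism whose value on a path $\xi$ from $1$ is $\tfrac{1}{2\pi i}\int_0^1\tau(\xi'\xi^{-1})\,dt$. On a positive invertible $a=e^b$ with $b=b^*$ one gets $\Delta(a)=\tfrac{1}{2\pi i}\widehat b$, which is purely imaginary, so the real part $\Delta_{\mathrm{re}}$, valued in $\mathrm{Aff}(T(A))/\overline{\rho_A(K_0(A))}$, vanishes on $A_+\cap GL(A)$, hence on $G$, hence on $\overline G=\widetilde P(A)$ by continuity. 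For a unitary $u\in\widetilde P(A)$ we have $u\in U_0(A)$ by the reduction, and $\Delta(u)$ is purely real, so $\Delta(u)=\Delta_{\mathrm{re}}(u)=0$. By Thomsen's theorem the determinant induces an isomorphism $U_0(A)/\overline{DU_0(A)}\cong \mathrm{Aff}(T(A))/\overline{\rho_A(K_0(A))}$, i.e.\ $\ker(\Delta|_{U_0(A)})=\overline{DU_0(A)}$; therefore $u\in\overline{DU_0(A)}$.

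\textbf{The inclusion $\supseteq$ and the main obstacle.} For the converse I would exploit that additive commutators exponentiate into $G$. For self-adjoint $h,k$ each factor of $g_t:=e^{th}e^{tk}e^{-th}e^{-tk}$ is positive invertible, so $g_t\in G$, while the Baker--Campbell--Hausdorff expansion gives $g_t=\exp\!\big(t^2[h,k]+O(t^3)\big)$. Writing $g_t=\exp(X_t)$ and raising to the power $n=\lfloor s t^{-2}\rfloor$ yields $g_t^{\,n}=\exp(nX_t)$ with $nX_t\to s[h,k]$ as $t\to0^+$; since $g_t^{\,n}\in G$ this shows $\exp(s[h,k])\in\overline G=\widetilde P(A)$ for every $s\in\R$ and all self-adjoint $h,k$ (note $[h,k]$ is skew-adjoint, so these are unitaries in $U_0(A)$). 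The remaining, and main, difficulty is to prove that the closed subgroup $M$ of $U_0(A)$ generated by all such $\exp(s[h,k])$ is exactly $\overline{DU_0(A)}$. The inclusion $M\subseteq\overline{DU_0(A)}$ is already supplied by the previous paragraph; the reverse requires showing that every multiplicative commutator of unitaries in $U_0(A)$ lies in $M$, and this is where I expect the real work to be. I would carry it out through the de la Harpe--Skandalis/Thomsen description of $\overline{DU_0(A)}$ as $\ker\Delta$, together with the surjectivity of $A_{sa}\to\mathrm{Aff}(T(A))$, so as to match the Lie-algebra-level generators $[h,k]$ (which exhaust the trace-zero self-adjoints) with the full kernel. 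Once $M=\overline{DU_0(A)}$ is established, combining with the reduction completes the proof.
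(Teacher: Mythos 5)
Your reduction to the single identity $\widetilde P(A)\cap U(A)=\overline{DU_0(A)}$ is correct and is essentially the same skeleton the paper uses (there, too, $\widetilde P(A)$ is shown to be a closed normal subgroup of $GL_0(A)$ and everything is filtered through polar decomposition). Both of your inclusions, however, have genuine problems.

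In the inclusion $\subseteq$ you invoke ``Thomsen's theorem'' in the form $U_0(A)/\overline{DU_0(A)}\cong \mathrm{Aff}(T(A))/\overline{\rho_A(K_0(A))}$, i.e.\ $\ker\bigl(\Delta|_{U_0(A)}\bigr)=\overline{DU_0(A)}$ for your determinant taken modulo $\overline{\rho_A(K_0(A))}$. That is not what Thomsen proves. His isomorphism is $U_0(A)/\overline{DU_0(A)}\cong \mathrm{Aff}(T(A))/\overline{\Delta_1^0(\pi_1(U_0(A)))}$, where $\Delta_1^0(\pi_1(U_0(A)))$ consists of determinants of loops of unitaries in $A$ itself; this is a subgroup of $\rho_A(K_0(A))$ which is in general proper, and the two coincide precisely when one can identify $\pi_1(U_0(A))$ with $K_0(A)$ --- which is what the stable rank one hypothesis buys in the paper's Lemma 1, via Rieffel's results. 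Since Theorem 2 carries no stable rank assumption, the kernel of your coarse determinant may be strictly larger than $\overline{DU_0(A)}$, and the step ``therefore $u\in\overline{DU_0(A)}$'' fails as written. The error is repairable: all your paths live in $GL_0(A)$, so you may work modulo the smaller subgroup $\overline{\Delta_1^0(\pi_1(U_0(A)))}$ (which is still real, so your real/imaginary splitting survives), and then the kernel identification is legitimate. The paper sidesteps the issue entirely: for the unitary part of a product of two positive invertibles it writes down an explicit path whose de la Harpe--Skandalis determinant vanishes exactly, as an element of $A/\overline{[A,A]}$ with no quotient taken, and then quotes the proof of Lemma 3 of de la Harpe--Skandalis together with Theorem 6.2(b) of the reference on normal subgroups.

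The inclusion $\supseteq$ is not a proof. Your BCH argument correctly puts each $\exp(s[h,k])$ into $\widetilde P(A)$, but the statement you yourself flag as the main difficulty --- that the closed group $M$ generated by these exhausts $\overline{DU_0(A)}$ --- is left as a plan (``match the Lie-algebra-level generators with the full kernel''), and that plan moreover leans on the same flawed kernel identification. This missing step is the substantive one: it amounts to showing that every multiplicative commutator of unitaries in $U_0(A)$ is a limit of products of exponentials of additive commutators, which is essentially Theorem 6.2 of the paper's reference on normal subgroups (or Proposition 4 of de la Harpe--Skandalis), not something that follows formally from surjectivity of $A_{sa}\to\mathrm{Aff}(T(A))$. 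The paper completes this direction by a different and self-contained route: $\widetilde P(A)$ is a closed normal subgroup of $GL_0(A)$; the quotient of $A$ by the closed ideal $I=\mathrm{Ideal}([\widetilde P(A),A])$ is commutative, because the positive invertibles become central there and they span; hence $\mathrm{Ideal}([A,A])=I$, and Theorem 1.1 of that reference then gives $DGL_0(A)\subseteq\widetilde P(A)$, so in particular $\overline{DU_0(A)}\subseteq\widetilde P(A)$. Until you either supply such a citation or prove $M=\overline{DU_0(A)}$, the proposal does not establish the theorem.
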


\section{Proofs of Theorems \ref{main} and \ref{main2}}
We will use some facts around the de la Harpe-Skandalis determinant. We introduce here necessary notation, but refer the reader to \cite{delaHarpe-skandalis} and \cite{thomsen} for background on this topic. 

Let $\overline{[A,A]}$ denote the closure of the linear span of the commutators $[x,y]:=xy-yx$, with $x,y\in A$. 
Let us regard $A/\overline{[A,A]}$ as a Banach space under the quotient norm. Let $T\colon A\to A/\overline{[A,A]}$ denote the quotient map (or ``universal trace"). Regard $T$ extended to $M_\infty(A)$ by setting $T((a_{ij})_{i,j})=T(\sum_{i=1}^\infty a_{ii})$.

Let $GL_0^\infty(A)$ denote the inductive limit of the groups $GL_0(M_n(A))$, $n\in \mathbb N$. Given a smooth path $\alpha\colon [t_1,t_2]\to GL_0^\infty(A)$, its de la Harpe-Skandalis determinant  is defined as 
\[
\widetilde \Delta_T(\alpha)=\int_{t_1}^{t_2} T(\alpha'(t)\alpha^{-1}(t))dt\in A/\overline{[A,A]}.
\]
If two paths $(a_1(t))_t$ and $(a_2(t)_t)$  both connect $1$ to an invertible element $a\in GL_0^\infty(A)$,
then the values of $\widetilde\Delta((a_1(t))_t)$ and $\widetilde\Delta((a_2(t))_t)$ differ by at most an element
of the abelian subgroup 
\[
2\pi i\{T(p)-T(q):p,q\in M_\infty(A)\hbox{ projections}\}.
\] 
Let's denote this subgroup by $2\pi iT(K_0(A))$). Given an invertible element $a\in GL_0^\infty(A)$, its de la Harpe-Skandalis determinant $\Delta_T(a)$ is defined as the image of $\widetilde\Delta_T((a(t)_t))$ in the quotient $(A/\overline{[A,A]})/2\pi iT(K_0(A))$, where $t\mapsto a(t)$ is any path in $GL_0^\infty(A)$ connecting $1$ to $a$.

\begin{proof}[Proof of Theorem \ref{main2}]
	Let $Q$ denote the set of invertible elements whose unitary part in the polar decomposition belongs to $\overline{DU_0(A)}$. Thus, $Q$ is defined by the condition $x|x|^{-1}\in  \overline{DU_0(A)}$. By the continuity of the operations involved in this condition, it is clear  that $Q$ is a norm closed subset of  $GL(A)$ (in fact, of $GL_0(A)$). 
	
	Let us prove that $Q$ is contained in $\widetilde P(A)$. By the form of the polar decomposition of the elements in $Q$,  it will suffice to show that  $DU_0(A)$ is contained in $\widetilde P(A)$. In fact, we will show that $DGL_0(A)$ is contained in $\widetilde P(A)$. It is easily checked, using polar decompositions, that $P(A)\cap GL_0(A)$  is invariant under similarity, i.e., it is a normal subgroup of $GL_0(A)$. Thus $\widetilde P(A)$ is a closed normal subgroup of $GL_0(A)$. By \cite[Theorem 1.1]{normal}, to show that  $\widetilde P(A)$ contains $DGL_0(A)$ it suffices to show the equality of closed two-sided ideals
	\[
	\mathrm{Ideal}([A,A])=\mathrm{Ideal}([\widetilde P(A),A]).
	\]
	Let $I$ be the ideal on the right hand side. It is clearly contained on the left hand side.
	Observe that  the image of $\widetilde P(A)$ in the quotient $A/I$ is central. Since the invertible positive elements of $A$ map onto the invertible positive elements of $A/I$, and the latter span $A/I$, it follows that $A/I$ is commutative. Thus,  $\mathrm{Ideal}([A,A])\subseteq I$, as desired. We thus conclude by \cite[Theorem 1.1]{normal} that $DGL_0(A)\subseteq \widetilde P(A)$.
	
	For the inclusion $\widetilde P(A)\subseteq Q$, it will suffice to show that $Q$ is a group, as it is a closed set (in $GL_0(A)$) and contains all invertible positive elements.  Let $u_1a_1$ and $u_2a_2$ be in $Q$, with $u_1,u_2\in \overline{DU_0(A)}$ and $a_1,a_2\in A_+$. Then,
	\[
	u_1a_1u_2a_2=u_1u_2 (u_2^*a_1u_2)a_2.
	\]
	Set $a_1'=u_2^*a_1u_2$. Let $a_1'a_2=w|a_1a_2|$ be the polar decomposition of $a_1'a_2$. Then
	\[
	u_1a_1u_2a_2=u_1u_2w |a_1a_2|.
	\]
	We will be done once we have shown that $w\in \overline{DU_0(A)}$. That is, we must show that given two invertible positive elements $a,b$, the unitary $u=ab|ab|^{-1/2}$ (in the polar decomposition of $ab$) belongs to $\overline{DU_0(A)}$. Write $a=e^{c}$ and $b=e^{d}$, with $c,d\in A$ selfadjoint.  
	Define $u_t=e^{tc}e^{td}|e^{td}e^{tc}|^{-1/2}$, for $t\in [0,1]$. This is a path of unitaries connecting 1 to $u$. Let us  calculate its determinant: 
	\begin{align*}
	\widetilde{\Delta}_T((u_t)_t) &=\widetilde\Delta_T((e^{tc})_t)+\widetilde\Delta_T((e^{td})_t)-\widetilde\Delta_T((|e^{tc}e^{td}|)_t)\\
	&=T(c)+T(d)-\widetilde\Delta_T((|e^{tc}e^{td}|)_t).
	\end{align*}
	Since  the path 
	\[
	t\mapsto |e^{tc}e^{td}|^2=e^{td}e^{2tc}e^{td}\] 
	has determinant $2(T(c)+T(d))$, the path $t\mapsto |e^{tc}e^{td}|$ has determinant $T(c)+T(d)$. Hence, from the above calculation we get $\widetilde\Delta_T((u_t)_t)=0$. By (the proof of) \cite[Lemma 3]{delaHarpe-skandalis},  
	\[
	u=\prod_{k=1}^N e^{ih_k},\] 
	where $h_k$ is selfadjoint for all $k$ and 
	$\sum_{k=1}^N h_k\in \overline{[A,A]}$. (From the proof of \cite[Lemma 3]{delaHarpe-skandalis}, $e^{ih_k}=u_{t_{k-1}}^{-1}u_{t_k}$ for all $k$, where $0=t_0<t_1<\cdots<t_N=1$ is a sufficiently  fine partition of the interval $[0,1]$.) Now $\prod_{k=1}^N e^{ih_k}$ belongs to $\overline{DU_0(A)}$,  by \cite[Theorem 6.2 (b)]{normal} (alternatively, by the proof of \cite[Proposition 4]{delaHarpe-skandalis}).	
\end{proof}		

In the proof of Theorem \ref{main} we rely on some results by Thomsen from \cite{thomsen}. We isolate these facts in a lemma:

\begin{lemma}\label{fromthomsen}
	Let $A$ be a unital C*-algebra. If $U_0(A)=\overline{DU_0(A)}$ then $\rho_A\colon K_0(A)\to \mathrm{Aff}(T(A))$ has dense range. Conversely, if $\rho_A$ has dense range and $A$ has stable rank one, then $U_0(A)=\overline{DU_0(A)}$. 	
\end{lemma}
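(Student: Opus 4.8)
The plan is to deduce both implications from the single structural fact, due to Thomsen, that the de la Harpe--Skandalis determinant organizes $U_0(A)$ against the pair $(K_0(A),T(A))$. Concretely, I would first recall that the determinant, restricted to $U_0(A)$, yields a continuous homomorphism
\[
\phi\colon U_0(A)\to \mathrm{Aff}(T(A))/\overline{\rho_A(K_0(A))},
\]
which on an exponential unitary $e^{ih}$ with $h$ selfadjoint is given by the class of the affine function $\tau\mapsto \tfrac{1}{2\pi}\tau(h)$. This is well defined precisely because a projection loop $e^{2\pi i p}=1$ has determinant $2\pi i\,T(p)$, which evaluates to $\rho_A([p])$ and is therefore killed upon passing to the quotient by $\overline{\rho_A(K_0(A))}$; continuity of $\phi$ is part of the basic theory of the determinant in \cite{delaHarpe-skandalis} and \cite{thomsen}. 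The image of $\phi$ is the image of the subspace $\{\tau\mapsto\tau(h):h\in A_{\mathrm{sa}}\}$, which is dense in $\mathrm{Aff}(T(A))$ by the standard Kadison representation of the selfadjoint part against the trace simplex.

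For the first implication I would argue that $\phi$ carries no stable-rank hypothesis. Since the universal trace $T$ annihilates commutators, $\phi$ vanishes on $DU_0(A)$, and hence on $\overline{DU_0(A)}$ by continuity. If $U_0(A)=\overline{DU_0(A)}$, then $\phi\equiv 0$; but its image is dense in the target, so the target must be the zero space, i.e. $\overline{\rho_A(K_0(A))}=\mathrm{Aff}(T(A))$. This is exactly the assertion that $\rho_A$ has dense range, and no appeal to stable rank one is needed here.

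For the converse I would invoke the substantive part of Thomsen's work: when $A$ has stable rank one, $\phi$ descends to an \emph{isomorphism} $U_0(A)/\overline{DU_0(A)}\cong \mathrm{Aff}(T(A))/\overline{\rho_A(K_0(A))}$; equivalently $\ker\phi=\overline{DU_0(A)}$. Granting this, if $\rho_A$ has dense range then the target vanishes, so $\ker\phi=U_0(A)$, whence $U_0(A)=\overline{DU_0(A)}$, as desired. Thus both directions reduce to quoting \cite{thomsen}, with stable rank one entering only through the injectivity statement.

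I expect the injectivity of $\phi$ modulo $\overline{DU_0(A)}$ to be the only genuine obstacle. The inclusion $\overline{DU_0(A)}\subseteq\ker\phi$ is formal, matching the argument already used in the proof of Theorem~\ref{main2} (a unitary with vanishing \emph{lifted} determinant lies in $\overline{DU_0(A)}$). Equality is delicate for two reasons: the target involves the \emph{closure} $\overline{\rho_A(K_0(A))}$ rather than $\rho_A(K_0(A))$ itself, and the $K_0$-data that the determinant must absorb is a priori represented by projections in matrix amplifications $M_n(A)$ rather than in $A$. Stable rank one is precisely what neutralizes both points, via cancellation/comparison and the reduction of the $GL_0^\infty$-level determinant data back to $U_0(A)$; since this is established in \cite{thomsen}, I would not reprove it but simply cite it, and the lemma follows.
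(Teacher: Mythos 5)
Your overall strategy is the same as the paper's: both implications are extracted from Thomsen's determinant machinery, and your first implication is correct and essentially identical to the paper's argument. (The paper phrases it through Thomsen's isomorphism $U_0(A)/\overline{DU_0(A)}\cong \mathrm{Aff}(T(A))/\overline{\Delta_1^0(\pi_1(U_0(A)))}$ together with the inclusion $\Delta_1^0(\pi_1(U_0(A)))\subseteq\rho_A(K_0(A))$; you phrase it through the dense image of a determinant homomorphism $\phi$ that kills $\overline{DU_0(A)}$. These are the same computation, and you are right that no stable rank hypothesis is needed there.)

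The problem is in the converse, at exactly the point you yourself flag as ``the only genuine obstacle.'' What Thomsen actually proves (\cite[Theorem 3.2]{thomsen}, valid for every unital $A$) is the isomorphism above, whose denominator $\overline{\Delta_1^0(\pi_1(U_0(A)))}$ is the closure of the determinant values of loops lying in $U_0(A)$ \emph{itself}. This is a priori only a subgroup of $\rho_A(K_0(A))$: a $K_0$-class corresponds under Bott periodicity to a loop in $U_0^\infty(A)$, which need not be homotopic to a loop of unitaries in $A$. The statement you propose to cite --- that under stable rank one $\ker\phi=\overline{DU_0(A)}$, i.e.\ that the denominator may be replaced by $\overline{\rho_A(K_0(A))}$ --- is not in \cite{thomsen}; it is precisely the piece of mathematics the paper has to add, and the mechanism is not cancellation/comparison but Rieffel's homotopy-group results: if $A$ has stable rank one then $C(\mathbb{T},A)$ has connected stable rank at most two \cite[Corollary 8.6]{rieffel}, so \cite[Proposition 2.6]{rieffel} applied to $C(\mathbb{T},A)$ shows that the inclusion $U_0(A)\subseteq U_0^\infty(A)$ induces an isomorphism $\pi_1(U_0(A))\cong\pi_1(U_0^\infty(A))\cong K_0(A)$, whence $\Delta_1^0(\pi_1(U_0(A)))=\rho_A(K_0(A))$. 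So your diagnosis of where stable rank one must enter is accurate, but your proof defers that step to a citation that does not contain it; supplying the Rieffel argument (or an equivalent) is what is needed to close the gap.
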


\begin{proof}
We maintain the notation used in \cite[Section 3]{thomsen}. By \cite[Theorem 3.2]{thomsen}, 
\begin{equation}\label{thomseniso}
U_0(A)/\overline{DU_0(A)}\cong \mathrm{Aff}(T(A))/\overline{\Delta_1^0(\pi_1(U_0(A)))},
\end{equation} 
where $\Delta_1^0(\pi_1(U_0(A))$ is a subgroup of $\rho_A(K_0(A))$.  Thus, from $U_0(A)=\overline{DU_0(A)}$ we get that $\Delta_1^0(\pi_1(U_0(A))$ is dense in $\mathrm{Aff}(T(A))$, and a fortiori, $\rho_A$
has dense range. (The map $\Delta_1^0$ is defined in terms of the de la Harpe-Skadalis determinant for paths recalled above. More specifically, given a loop $\alpha$ in $U_0(A)$ starting and ending in 1, one computes $h=\frac{1}{2\pi i}\widetilde \Delta_T(\alpha)\in A/\overline{[A,A]}$, and  takes the affine function $\widehat h\in \mathrm{Aff}(T(A))$ induced by this element. Then $\Delta_1^0([\alpha]):=\widehat h$.
By the homotopy invariance of the determinant, and the isomorphism $\pi_1(U_0^\infty(A))\cong K_0(A)$ from Bott periodicity, $\Delta_1^0([\alpha])\in \rho_A(K_0(A))$.)
	
Suppose now that  $\rho_A$ has dense range and $A$ has stable rank one. By the isomorphism \eqref{thomseniso}, 
 to show that $U_0(A)=\overline{DU_0(A)}$ it suffices to show that $\Delta_1^0(\pi_1(U_0(A)))=\rho_A(K_0(A))$.
For C*-algebras of stable rank one, the inclusion of $U_0(A)$ in $U_0^\infty(A)$ induces an isomorphism  of homotopy groups $\pi_1(U_0(A))\cong \pi_1(U_0^\infty(A))$.  This follows from combining \cite[Corollary 8.6]{rieffel}, which says if $A$ has stable rank one then $C(\mathbb{T},A)$ has connected stable rank at most two, and \cite[Proposition 2.6]{rieffel} applied to $C(\mathbb{T},A)$. Hence,
	$\pi_1(U_0(A))\cong \pi_1(U_0^\infty(A))\cong K_0(A)$.  The subgroup $\Delta_1^0(\pi_1(U_0(A)))$  is then precisely $\rho_A(K_0(A))$. 
\end{proof}

\begin{proof}[Proof of Theorem \ref{main}]
(i)$\Rightarrow$(ii). In fact, this implication is already present in \cite{mur-phil}. Suppose that $A$ has the APFP. By \cite[Proposition 2.7]{mur-phil},  $A$ has no finite dimensional representations,  and by \cite[Theorem 2.4]{mur-phil}, it has stable rank one and trivial $K_1$-group. (These are also fairly straightforward  consequences of the APFP.) The density of the range of $\rho_A$ is essentially proven in \cite[Lemma 2.8]{mur-phil}, but since that result is stated differently, and in less generality, we include an argument here. Let $u\in U_0(A)$. Then $u$ is the limit of a sequence of elements in $\widetilde P(A)$. By Theorem \ref{main2}, they have the form $w_na_n$ with $w_n\in \overline{DU_0(A)}$ and $a_n\in A_+$. Comparing unitary parts, we get that $w_n\to u$. It follows that $U_0(A)=\overline{DU_0(A)}$. Hence, by Lemma \ref{fromthomsen}, $\rho_A$ has dense range.

(ii)$\Rightarrow$(i). Since $A$ has stable rank one, it suffices to approximate invertible elements of $A$ by elements of $P(A)$. By polar decomposition, every invertible element has the form $ua$, where $u$ is a unitary and $a\in A_+$. By the description of $\widetilde P(A)$ from Theorem \ref{main2}, we will be done once we have shown that $U(A)=\overline{DU_0(A)}$.
Now, by the $K_1$-injectivity/surjectivity of stable rank one C*-algebras (\cite[Theorem 2.10]{rieffel}), $U(A)/U_0(A)\cong K_1(A)$. Since we have assumed that $K_1(A)=0$, we get that $U(A)=U_0(A)$. Thus, it remains to show that $U_0(A)=\overline{DU_0(A)}$. This follows from Lemma \ref{fromthomsen}.  
\end{proof}

For a nonunital C*-algebra $A$, Murphy and Phillips define the approximate positive factorization property  by asking that $1+P(A)$ be dense in $1+A$, in the unitization of $A$. A different, weaker, property is obtained by simply asking that $P(A)$ be dense in $A$. We have not  pursued the investigation of these approximate positive factorization properties in the nonunital setting.

\begin{bibdiv}
\begin{biblist}
	

\bib{Dur}{article}{
	author={Gong, Guihua},
	author={Lin, Huaxin},
	author={Xue, Yifeng},
	title={Determinant rank of $C^*$-algebras},
	journal={Pacific J. Math.},
	volume={274},
	date={2015},
	number={2},
	pages={405--436},
}

\bib{delaHarpe-skandalis}{article}{
	author={de la Harpe, P.},
	author={Skandalis, G.},
	title={D\'{e}terminant associ\'{e} \`a une trace sur une alg\'{e}bre de Banach},
	language={French, with English summary},
	journal={Ann. Inst. Fourier (Grenoble)},
	volume={34},
	date={1984},
	number={1},
	pages={241--260},
}
	
\bib{mur-phil}{article}{
	author={Murphy, G. J.},
	author={Phillips, N. C.},
	title={$C^*$-algebras with the approximate positive factorization
		property},
	journal={Trans. Amer. Math. Soc.},
	volume={348},
	date={1996},
	number={6},
	pages={2291--2306},
}

\bib{quinn}{book}{
	author={Quinn, Terrance},
	title={Factorization in C*-algebras: Products of positive operators},
	note={Thesis (Ph.D.)--Dalhousie University (Canada)},
	publisher={ProQuest LLC, Ann Arbor, MI},
	date={1992},
	pages={185},
}

\bib{rieffel}{article}{
	author={Rieffel, Marc A.},
	title={The homotopy groups of the unitary groups of noncommutative tori},
	journal={J. Operator Theory},
	volume={17},
	date={1987},
	number={2},
	pages={237--254},
}

\bib{normal}{article}{
	author={Robert, Leonel},
	title={Normal subgroups of invertibles and of unitaries in a ${\rm
			C}^*$-algebra},
	journal={J. Reine Angew. Math.},
	volume={756},
	date={2019},
	pages={285--319},
}

\bib{thomsen}{article}{
	author={Thomsen, Klaus},
	title={Traces, unitary characters and crossed products by ${\bf Z}$},
	journal={Publ. Res. Inst. Math. Sci.},
	volume={31},
	date={1995},
	number={6},
	pages={1011--1029},
}

\end{biblist}
\end{bibdiv}

\end{document}